\documentclass[12pt,a4,notitlepage]{amsart}
\usepackage{amsfonts,amssymb}%,refcheck}

\textheight=675pt
\textwidth=480pt
\oddsidemargin=0pt
\evensidemargin=0pt
\topmargin=-30pt

\headheight=0in
\headsep = 0.51in
\parindent=0.2in

\newcommand{\w}{\omega}
\newcommand{\e}{\varepsilon}
\newcommand{\IR}{\mathbb R}
\newcommand{\IN}{\mathbb N}

\newcommand{\supp}{\mathrm{supp}}

\newcommand{\K}{\mathcal K}

\newcommand{\Ra}{\Rightarrow}

\newtheorem{example}{Example}

\newtheorem{theorem}{Theorem}

\newtheorem{corollary}{Corollary}
\newtheorem{lemma}{Lemma}

\theoremstyle{definition}
\newtheorem{definition}{Definition}

\newtheorem{problem}{Problem}

\usepackage{color}

\theoremstyle{definition}

\theoremstyle{remark}

\numberwithin{equation}{section}

\newcommand{\fA}{\mathfrak A}

\newcommand{\cZ}{\mathcal Z}

\newcommand{\sm}{\setminus}
\newcommand{\en}{\mathbb N}
\newcommand{\pr}{\mathrm{pr}}
\setcounter{tocdepth}{1}

\begin{document}

\title{Josefson-Nissenzweig property for $C_{p}$-spaces}
\author{T. Banakh, J. K\c akol, W. \'Sliwa}
\address{Ivan Franko National University of Lviv (Ukraine) and Jan Kochanowski University in Kielce (Poland)}

\email{t.o.banakh@gmail.com}

\address{Faculty of Mathematics and Informatics, A. Mickiewicz University  61-614 Pozna{\'{n}} , and Institute of Mathematics Czech Academy of Sciences, Prague}

\email{kakol@amu.edu.pl}
\address{Faculty of Mathematics and Natural Sciences,
University of Rzesz\'ow%
%TCIMACRO{
%\TeXButton{Indent}{\newline
%\indent}}%
%BeginExpansion
\newline
\indent%
%EndExpansion
35-310 Rzesz\'ow, Poland }
\email{sliwa@amu.edu.pl; wsliwa@ur.edu.pl}

\subjclass{46A03, 46B25, 46E10, 54C35, 54E35}
\keywords{Efimov space, pseudocompact space, function space, quotient space, metrizable quotient}

\dedicatory{To the memory of our Friend Professor Mati Rubin}

\begin{abstract}
The famous  Rosenthal-Lacey theorem asserts that  for each infinite compact space $K$ the  Banach space $C(K)$ admits a quotient which is either a copy of $c_{0}$ or $\ell_{2}$.
The aim of the paper is to study a natural  variant of this result for the space $C_{p}(K)$ of continuous real-valued maps on $K$ with the pointwise topology.
Following famous Josefson-Nissenzweig theorem for infinite-dimensional Banach spaces we introduce a corresponding property (called Josefson-Nissenzweig property, briefly, the JNP) for $C_{p}$-spaces.
We prove: For a Tychonoff space $X$ the  space $C_p(X)$ satisfies the JNP if and only if $C_p(X)$ has a quotient isomorphic to $c_{0}$ (with the product topology of $\IR^\mathbb{N}$)  if and only if $C_{p}(X)$ contains a complemented subspace, isomorphic to $c_0$.
For a pseudocompact space $X$ the space $C_p(X)$ has the JNP if and only if $C_p(X)$ has a complemented metrizable infinite-dimensional subspace.  This applies to show that for a Tychonoff space $X$ the space $C_p(X)$ has a complemented subspace isomorphic to $\IR^\IN$ or $c_0$ if and only if $X$ is not pseudocompact or $C_p(X)$ has the JNP. The space $C_{p}(\beta\mathbb{N})$ contains a subspace isomorphic to $c_0$ and admits a  quotient isomorphic to $\ell_{\infty}$  but fails to have a quotient isomorphic to $c_{0}$. An example of a compact  space $K$ without infinite convergent sequences with $C_{p}(K)$  containing a complemented subspace isomorphic to $c_{0}$ is constructed.
\end{abstract}
\maketitle

%\begin{document}

\maketitle

%\begin{docuemnt}

\section{Introduction and  the main problem}

%\medskip
%\medskip
%\medskip
%\section{Applying weak$^*$ convergent sequences for constructing metrizable quotient spaces}
Let $X$ be  a Tychonoff space. By $C_{p}(X)$ we denote the space of real-valued continuous functions on $X$ endowed with the pointwise topology.

We will need  the following simple observation stating that each metrizable (linear) quotient $C_{p}(X)/Z$ of $C_p(X)$ by a closed vector subspace $Z$ of  $C_p(X)$ is\emph{ separable}. Indeed, this follows from the separability of metizable spaces of countable cellularity and the fact that  $C_p(X)$ has countable cellularity, being a dense subspace of $\IR^X$, see \cite{Arhangel}.

The classic Rosenthal-Lacey theorem, see \cite{Ro}, \cite{JR},  and \cite{La},  asserts that the Banach space $C(K)$  of continuous real-valued maps on an infinite compact space $K$ has a  quotient  isomorphic to $c_{0}$ or $\ell_{2},$ or equivalently, there exists a continuous linear (and open; by the open mapping Banach theorem) map from $C(K)$ onto $c_{0}$ or $\ell_{2}$.

This theorem motivates the following  natural question for spaces $C_{p}(X)$.

\begin{problem}\label{qu1} For which  compact  spaces $K$ any of the following equivalent conditions holds:
\begin{enumerate}
\item The space  $C_{p}(K)$ can  be mapped onto an infinite dimensional   metrizable locally convex space under a continuous open linear map.
\item The space  $C_{p}(K)$ can  be mapped onto an infinite dimensional   metrizable separable locally convex space under a continuous open linear map.
\item The space $C_{p}(K)$ has an infinite dimensional metrizable  quotient.
\item The space $C_{p}(K)$ has an infinite dimensional metrizable separable  quotient.
\item The space $C_p(K)$ has a quotient isomorphic to a dense subspace of $\IR^{\mathbb{N}}$.
\end{enumerate}
\end{problem}

  Note that there is a continuous linear map from a real topological vector space $E$ onto a dense subspace of $\IR^{\mathbb{N}}$ if and only if the continuous dual $E'$ is infinite dimensional. Thus when $K$ is infinite, (1) and (2) hold provided we delete ``open" in both cases. When we retain ``open" and delete ``metrizable" in (2), the question is unsolved and more general: \emph{For every infinite compact set $K$, does $C_p(K)$ admit an infinite dimensional separable quotient?}

In \cite{sa2} it was shown    that  $C_{p}(K)$   has an \emph{infinite-dimensional separable quotient algebra} if and only if  $K$ contains an infinite countable closed subset. Hence $C_{p}(\beta\mathbb{N})$ lacks infinite-dimensional separable quotient algebras. Nevertheless, as proved
in  \cite[Theorem 4]{kakol-sliwa}, the space  $C_{p}(K)$ has infinite-dimensional  separable  quotient for any compact space $K$ containing a copy of $\beta\mathbb{N}$.

Problem \ref{qu1} has been already partially studied in \cite{BKS}, where  we proved that for a Tychonoff space $X$ the  space $C_p(X)$ has an infinite-dimensional metrizable quotient if $X$ either contains an infinite discrete $C^*$-embedded subspace or else $X$ has a sequence $(K_n)_{n\in\IN}$ of compact subsets such that for every $n$ the space $K_n$ contains two disjoint topological copies of $K_{n+1}$. If fact, the first case (for example if compact $X$ contains a copy of $\beta\mathbb{N}$) asserts that  $C_{p}(X)$ has a quotient isomorphic  to the subspace $\ell_\infty=\{(x_n)\in \IR^\mathbb{N}:\sup_n |x_n|<\infty\}$ of $\IR^\mathbb{N}$ or to the product $\mathbb{R}^{\mathbb{N}}$.

Consequently, this theorem  reduces  Problem~\ref{qu1} to the case when $K$ is an \emph{Efimov space} (i.e. $K$ is an infinite compact space that contains neither a non-trivial  convergent sequence nor a copy of $\beta\mathbb{N}$).
Although, it is unknown if Efimov spaces exist in ZFC (see  \cite{dow}, \cite{dow1}, \cite{dow2}, \cite{efimov}, \cite{fedorczuk1},  \cite{fedorczuk2},  \cite{geschke},  \cite{hart})  we showed in \cite{kakol-sliwa} that under $\lozenge$ for some  Efimov spaces $K$ the function space $C_{p}(K)$ has an infinite dimensional metrizable quotient.

In this paper $c_{0}$ means the subspace $\{(x_n)_{n\in\mathbb N}\in \IR^\mathbb{N}:x_n\to_n 0\}$ of $\mathbb{R}^{\mathbb{N}}$ endowed with the product topology. %The space $c_0$ is {\em Polishable} in the sense that it is Polish with respect to a stronger locally convex topology.

\section{The main results}
%\medskip

%\medskip

For a Tychonoff space $X$ and a point $x\in X$ let $\delta_x:C_p(X)\to\IR,\,\,\, \delta_x:f\mapsto f(x),$ be the Dirac measure concentrated at $x$. The linear hull $L_p(X)$ of the set $\{\delta_x:x\in X\}$ in $\IR^{C_p(X)}$ can be identified with the dual space of $C_p(X)$.

Elements of the space $L_p(X)$ will be called {\em finitely supported sign-measures} (or simply {\em sign-measures}) on $X$.

Each $\mu\in L_p(X)$ can be uniquely written as a linear combination of Dirac measures $\mu=\sum_{x\in F}\alpha_x\delta_x$ for some finite set $F\subset X$ and some non-zero real numbers $\alpha_x$. The set $F$ is called the {\em support} of the sign-measure $\mu$ and is denoted by $\supp(\mu)$. The measure $\sum_{x\in F}|\alpha_x|\delta_x$ will be denoted by $|\mu|$ and the real number $\|\mu\|=\sum_{x\in F}|\alpha_x|$ coincides with the {\em norm} of $\mu$ (in the dual Banach space $C(\beta X)^*$).

The sign-measure $\mu=\sum_{x\in F}\alpha_x\delta_x$ determines the function $\mu:2^X\to \IR$ defined on the power-set of $X$ and assigning to each subset $A\subset X$ the real number $\sum_{x\in A\cap F}\alpha_x$. So, a finitely supported sign-measure will be considered both as a linear functional on $C_p(X)$ and an additive function on the power-set $2^X$.

The famous \emph{Josefson-Nissenzweig theorem}  asserts that for each infinite-dimensional Banach space $E$ there exists a null sequence in the weak$^{*}$-topology of the topological dual  $E^*$ of $E$ and  which is of norm one in the dual  norm, see for example \cite{Diestel}.

We propose the following corresponding property for spaces $C_{p}(X)$.
\begin{definition}\label{def}
For a   Tychonoff space $X$  the space $C_{p}(X)$ satisfies the \emph{Josefson-Nissenzweig property} (JNP in short) if there exists a sequence $(\mu_n)$ of finitely supported sign-measures  on $X$ such that $\|\mu_n\|=1$ for all $n\in \IN$, and $\mu_n(f)\to_n 0$ for each $f\in C_p(X)$.
\end{definition}

Concerning the JNP of function spaces $C_p(X)$ on compacta we have the following:
\begin{enumerate}
\item \emph{If a compact space $K$ contains a non-trivial convergent sequence, say $x_{n}\rightarrow x$, then $C_{p}(K)$ satisfies the JNP.} This is witnessed by the weak$^*$ null sequence $(\mu_n)$ of sign-measures $\mu_{n}:=\frac12(\delta_{x_{n}}-\delta_{x})$, $n\in\IN$.
\item  \emph{The space $C_{p}(\beta\mathbb{N})$ does not satisfy the JNP.} This follows directly from the Grothendieck theorem, see \cite[Corollary 4.5.8]{dales}.
\item \emph{There exists a compact space $K$ containing a copy of $\beta\mathbb{N}$ but without non-trivial convergent sequences such that $C_{p}(K)$ satisfies  the JNP,} see Example \ref{exa} below.
\end{enumerate}

Consequently,  if compact $K$ contains an infinite convergent sequence $x_{n}\rightarrow x$, then  $C_{p}(K)$ satisfies the JNP with  $C_{p}(Z)$  complemented in $C_{p}(K)$ and isomorphic to $c_{0}$, where $Z:=\{x\}\cup\{x_{n}\}_{n\in\mathbb{N}}$. However for every infinite compact  $K$ the space $C_{p}(K)$ contains a subspace isomorphic to $c_{0}$ but not necessary complemented in $C_{p}(K)$.  Nevertheless, there exists  a compact space $K$  without  infinite convergent sequences and such that $C_{p}(K)$  enjoys  the JNP  (hence  contains complemented subspaces isomorphic to $c_{0}$, as follows from Theorem \ref{c:T2} below).

 It turns out that the Josefson-Nissenzweig property characterizes an interesting case related with Problem \ref{qu1}.
%The main result of the paper is the following characterization of the JNP in function spaces over pseudocompact spaces.

\begin{theorem}\label{c:T2} For a Tychonoff space $X$ the following conditions are equivalent:
\begin{enumerate}
\item $C_{p}(X)$ satisfies the JNP;
\item $C_p(X)$ contains a complemented  subspace isomorphic to $c_0$;
\item $C_p(X)$ contains a quotient isomorphic to $c_0$.
\end{enumerate}
If the space $X$ is pseudocompact, then the conditions \textup{(1)--(3)} are equivalent to
\begin{enumerate}
\item[\textup{(4)}] $C_{p}(X)$ contains a complemented infinite-dimensional metrizable subspace;
\item[\textup{(5)}] $C_{p}(X)$ contains a complemented infinite-dimensional separable subspace;
\item[\textup{(6)}] $C_p(X)$ has an infinite-dimensional Polishable quotient.
\end{enumerate}
\end{theorem}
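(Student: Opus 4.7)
The equivalence $(2) \Leftrightarrow (3)$ is largely formal: $(2) \Rightarrow (3)$ follows because the projection onto a complemented copy of $c_0$ in $C_p(X)$ is itself a continuous linear open surjection onto $c_0$, while $(3) \Rightarrow (1)$ proceeds by pulling back the coordinate functionals $e_n^* \in c_0^*$ along a continuous linear surjection $Q : C_p(X) \to c_0$. The pullbacks $\mu_n := e_n^* \circ Q$ lie in $C_p(X)^* = L_p(X)$, are linearly independent finitely supported sign-measures, and satisfy $\mu_n(f) = Q(f)(n) \to 0$ since $Q(f) \in c_0$; after a suitable rescaling this produces a JNP sequence of unit-norm sign-measures.

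The heart of the theorem is $(1) \Rightarrow (2)$. Starting from a JNP sequence $(\mu_n)$ with $\|\mu_n\|=1$ and $\mu_n(f)\to 0$ for every $f \in C_p(X)$, the plan is first to extract a modified subsequence $(\tilde\mu_n)$ whose supports become pairwise disjoint. Writing each $\mu_n = \mu_n^+ - \mu_n^-$ on disjoint finite sets $F_n^\pm$, one proceeds by a diagonal argument: at stage $k$, having chosen disjoint supports $F_1,\dots,F_k$, pass to a tail along which the mass $\mu_j(\{x\})$ converges for every $x$ in the finite set $F_1\cup\cdots\cup F_k$, then subtract off the asymptotic mass in a way that preserves both weak$^*$-nullness and a uniform lower bound on norms. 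This produces sign-measures $\tilde\mu_n$ with pairwise disjoint supports $F_n$, norm bounded below, and still weak$^*$-null on $C_p(X)$. This preprocessing is the principal obstacle, since common mass on fixed support points need not vanish along subsequences and must be actively subtracted off while retaining all quantitative properties.

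Next, the Tychonoff property supplies continuous functions $h_n : X \to [-1,1]$ with $h_n \equiv 1$ on $F_n^+$, $h_n \equiv -1$ on $F_n^-$, and $h_n \equiv 0$ on $F_1\cup\cdots\cup F_{n-1}$, by functional separation of the three pairwise disjoint finite sets $F_n^+$, $F_n^-$, $F_1\cup\cdots\cup F_{n-1}$. A further subsequence extraction, exploiting $\tilde\mu_j(h_k)\to 0$ as $j\to\infty$ for each fixed $k$, ensures that the ``lower-triangular'' entries $\tilde\mu_n(h_k)$ with $k<n$ satisfy $|\tilde\mu_n(h_k)| < 2^{-n-k}$. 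Define $T: C_p(X)\to c_0$ by $T(f):=\bigl(\tilde\mu_n(f)\bigr)_n$ (the image lies in $c_0$ by the JNP) and $S: c_0\to C_p(X)$ by $S\bigl((a_n)\bigr):=\sum_n a_n h_n$, where the series converges pointwise because at most one $h_n(x)$ is nonzero at each $x\in X$. The matrix $\bigl(\tilde\mu_n(h_k)\bigr)$ is lower-triangular with unit diagonal and small off-diagonal, hence $T\circ S = I + L$ is invertible on $c_0$, and $P := S\circ (T\circ S)^{-1}\circ T$ is a continuous linear projection of $C_p(X)$ onto $S(c_0)\cong c_0$, establishing (2).

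For the pseudocompact case, $(2)\Rightarrow(4)$ and $(2)\Rightarrow(5)$ are immediate, as $c_0$ is metrizable and separable. The remaining implications use pseudocompactness. For $(4)\Rightarrow(6)$ and $(5)\Rightarrow(6)$, pseudocompactness of $X$ identifies $C_p(X)$ with a subspace of the Banach space $C(\beta X)$, so the sup-norm topology induces a Polish strengthening of a complemented metrizable (equivalently, separable) subspace $M$, hence of the quotient $C_p(X)/\ker P\cong M$. For $(6)\Rightarrow(1)$, a version of the Josefson--Nissenzweig theorem for infinite-dimensional Fr\'echet (or Polish locally convex) spaces supplies a weak$^*$-null unit-norm sequence in the dual of the Polishable quotient, and this sequence pulls back through the quotient map to a JNP sequence in $L_p(X)$.
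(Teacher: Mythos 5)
Your treatment of the easy implications is fine, but each of the three hard implications has a genuine gap. For $(1)\Rightarrow(2)$ the disjointification step, which you yourself identify as the principal obstacle, does not work as sketched: test it on the simplest JNP sequence $\mu_n=\frac12(\delta_{x_n}-\delta_{x})$ coming from a convergent sequence $x_n\to x$. Every $\mu_n$ carries mass $-\frac12$ at the fixed point $x$; the point masses on the fixed finite set converge, but subtracting the asymptotic mass leaves $\frac12\delta_{x_n}$, which is no longer weak$^*$-null, while deleting the current mass destroys weak$^*$-nullness as well. The paper resolves exactly this difficulty by first showing that $\{\mu_n\}$ is not relatively weakly compact in $C(\beta X)^*$ (Eberlein--\v Smulian together with Schur's theorem in $\ell_1(S)$, $S=\bigcup_n\supp(\mu_n)$) and then invoking Grothendieck's characterization to produce $\e>0$ and \emph{pairwise disjoint open sets} $U_n$ with $|\mu_{m_n}(U_n)|>\e$; nothing in your sketch substitutes for this, and a mere norm lower bound on differences also needs the Schur argument. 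Moreover, even granting measures with pairwise disjoint finite supports $F_n$, the functions $h_n$ obtained by separating finite sets need not have pairwise disjoint supports: pairwise disjoint finite sets in a compact space need not admit pairwise disjoint neighborhoods (take singletons from a countable subset of $\beta\mathbb{N}\setminus\mathbb{N}$ each of whose points is an accumulation point of the others), so the claim that at most one $h_n(x)$ is nonzero fails and $S\bigl((a_n)\bigr)=\sum_n a_nh_n$ need not be a continuous function, i.e.\ $S$ need not map into $C_p(X)$. The disjoint open sets from Grothendieck's theorem are precisely what lets the paper choose disjointly supported $\varphi_k$ and make its analogue of your operator well defined; your triangular perturbation $(I+L)^{-1}$ then matches the paper's operator $\Theta$, so that part is fine once the real work is done.

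For $(3)\Rightarrow(1)$ the phrase ``after a suitable rescaling'' hides the actual problem: if $\|e^*_n\circ Q\|\to 0$, normalizing destroys weak$^*$-nullness, and one must show this case cannot occur. The paper does this with a substantial argument that uses openness of the quotient map in an essential way (functional boundedness of $\bigcup_n\supp(\lambda_n)$, passage to the sup-norm completion over its closure, a closed graph/open mapping argument, and a quantitative contradiction with $\|\lambda_n\|<\e$); your proposal omits this case entirely. For $(6)\Rightarrow(1)$ the appeal to a Josefson--Nissenzweig theorem for Fr\'echet spaces does not deliver what is needed: the functionals it would produce are continuous only for the finer Polish topology on the quotient, so their pullbacks need not be $C_p(X)$-continuous, hence need not be finitely supported sign-measures, and the normalization $\|\mu_n\|=1$ in the total-variation norm is not controlled. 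Tellingly, your sketch of $(6)\Rightarrow(1)$ never uses pseudocompactness, yet the implication is false without it: for $X=\mathbb{N}$, $C_p(X)=\IR^{\IN}$ is its own infinite-dimensional Polish quotient but contains no complemented copy of $c_0$ (since $c_0$ is dense, not closed, in $\IR^\IN$), hence fails the JNP. The paper instead uses pseudocompactness to compare the Polish topology with the quotient of the sup-norm Banach topology via the closed graph theorem and builds a biorthogonal sequence of finitely supported functionals vanishing on $Z$. Similarly, in $(4)/(5)\Rightarrow(6)$ your claim that the sup-norm gives a Polish strengthening of a pointwise-separable complemented subspace is unjustified at the separability point: pointwise separability does not imply sup-norm separability, and supplying it is exactly the content of the paper's Lemma~\ref{l:Polish} (factoring through a compact metrizable continuous image of $X$), with Lemma~\ref{l:MS} covering the passage from metrizable to separable.
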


We recall that a locally convex space $X$  is {\em Polishable} if $X$ admits a stronger separable Fr\'echt (= complete metrizable)  locally convex topology.  Equivalently, Polishable locally convex spaces can be defined as images of separable Fr\'echet spaces under continuous linear maps. Clearly, the subspace $c_0$ of $\IR^{\IN}$ is Polishable.

A topological space $X$ is {\em pseudocompact} if it is Tychonoff and each continuous real-valued function on $X$ is bounded. It is known (see \cite{BKS}) that a Tychonoff space $X$ is not pseudocompact if and only if $C_{p}(X)$ contains a complemented copy of $\mathbb{R}^{\mathbb{N}}$. Combining this characterization with Theorem~\ref{c:T2}, we obtain another characterization related to Problem~\ref{qu1}.

\begin{corollary}\label{c01} For a Tychonoff space $X$ the following conditions are equivalent:
\begin{enumerate}
\item $C_p(X)$ has an infinite-dimensional Polishable quotient;
\item $C_p(X)$ contains a complemeneted infinite-dimensional Polishable subspace;
\item $C_p(X)$ contains a complemented subspace isomorphic to $\IR^{\IN}$ or $c_0$;
\item $X$ is not pseudocompact or $C_p(X)$ has the JNP.
\end{enumerate}
\end{corollary}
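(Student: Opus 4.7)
The plan is to assemble the corollary from Theorem~\ref{c:T2} together with the characterization of non-pseudocompactness from \cite{BKS} that is cited in the paragraph preceding the corollary: $X$ is not pseudocompact iff $C_p(X)$ contains a complemented copy of $\IR^\IN$. Once one sees this, the corollary is essentially a case analysis on whether $X$ is pseudocompact, and no new constructions are needed.

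First I would dispatch the easy chain (3) $\Rightarrow$ (2) $\Rightarrow$ (1). The first implication is immediate from the remark, made just after the statement of Theorem~\ref{c:T2}, that both $\IR^\IN$ (itself a separable Fr\'echet space) and $c_0$ (carrying the product topology inherited from $\IR^\IN$) are Polishable. The second implication uses the standard fact that if $Y$ is a complemented subspace of a locally convex space $E$ with topological complement $Z$, then the quotient map $E\to E/Z$ restricts to a topological isomorphism on $Y$, so $E/Z\cong Y$ is Polishable whenever $Y$ is.

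Next, for (4) $\Rightarrow$ (3), I would split into two subcases depending on which disjunct of (4) is realized. If $X$ is not pseudocompact, the \cite{BKS} characterization gives a complemented copy of $\IR^\IN$ in $C_p(X)$, hence (3). If $C_p(X)$ has the JNP, then Theorem~\ref{c:T2} (equivalence of (1) and (2)) directly supplies a complemented copy of $c_0$, again yielding (3).

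The only non-trivial step is (1) $\Rightarrow$ (4). Here I would argue by contrapositive of the disjunction: assume $X$ \emph{is} pseudocompact, and show that an infinite-dimensional Polishable quotient of $C_p(X)$ forces the JNP. But this is precisely the content of the implication (6) $\Rightarrow$ (1) in Theorem~\ref{c:T2}, which applies in the pseudocompact case; so the step is immediate given that theorem. This is the only place where the difficult part of Theorem~\ref{c:T2} (the pseudocompact supplement) is invoked, and it is the ``main obstacle'' in the sense that without it the corollary would require redoing the construction of a Josefson--Nissenzweig sequence from a complemented metrizable quotient on a pseudocompact space. With Theorem~\ref{c:T2} in hand, however, the corollary reduces to a clean bookkeeping argument.
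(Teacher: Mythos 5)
Your proposal is correct and follows exactly the route the paper intends: the paper gives no separate proof of Corollary~\ref{c01} beyond the remark that it follows by combining the \cite{BKS} characterization of non-pseudocompactness (complemented copy of $\IR^\IN$) with Theorem~\ref{c:T2}, and your cycle $(1)\Rightarrow(4)\Rightarrow(3)\Rightarrow(2)\Rightarrow(1)$, with the pseudocompact case of $(1)\Rightarrow(4)$ handled by the implication $(6)\Rightarrow(1)$ of Theorem~\ref{c:T2}, is precisely that argument.
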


\begin{corollary}\label{co2}
The space $C_{p}(\beta\mathbb{N})$
\begin{enumerate}
\item has a quotient isomorphic to $\ell_{\infty}$;
\item contains a subspace isomorphic to $c_{0}$;
\item has no quotient isomorphic to $c_0$;
\item  has no Polishable infinite-dimensional quotients;
\item contains no complemented separable infinite-dimensional subspaces.
\end{enumerate}
\end{corollary}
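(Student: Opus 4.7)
The strategy is that (1)--(2) are direct instances of facts already recorded in the introduction, while (3)--(5) are ``no-go'' consequences of a single observation, namely that $C_p(\beta\IN)$ fails the JNP. This was listed in item~(2) of the discussion after Definition~\ref{def} and rests on Grothendieck's theorem applied to $C(\beta\IN)=\ell_\infty$; so I intend to take it as given.

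For (1), I invoke the result cited after Problem~\ref{qu1}: if a compact space contains an infinite discrete $C^*$-embedded subspace, then $C_p$ of it admits $\ell_\infty\subset\IR^\IN$ as a quotient. Since $\IN$ is such a subspace of $\beta\IN$, the restriction map $f\mapsto f|_\IN$ realizes the required quotient onto $\ell_\infty$ with the topology inherited from $\IR^\IN$. For (2) I can either cite the remark in the introduction that $C_p(K)$ contains a copy of $c_0$ for every infinite compact $K$, or give the explicit embedding $T\colon c_0\to C_p(\beta\IN)$ obtained by fixing a partition $\IN=\bigsqcup_{n\in\IN}A_n$ into infinite sets and letting $T(a)$ be the continuous extension to $\beta\IN$ of the bounded function on $\IN$ that equals $a_n$ on each $A_n$. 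Because disjoint subsets of $\IN$ have disjoint closures in $\beta\IN$, and because $a_n\to 0$ forces $T(a)\equiv 0$ off $\bigcup_n\overline{A_n}$, the map $T$ is a linear homeomorphism of $c_0$ (product topology) onto its image in $C_p(\beta\IN)$.

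Items (3)--(5) are each forbidden by the failure of the JNP. By the equivalence (1)$\Leftrightarrow$(3) of Theorem~\ref{c:T2}, a quotient of $C_p(\beta\IN)$ isomorphic to $c_0$ would yield the JNP, proving (3). Since $\beta\IN$ is pseudocompact, Corollary~\ref{c01} equates an infinite-dimensional Polishable quotient of $C_p(\beta\IN)$ with the JNP, giving (4). Finally, invoking the implication (5)$\Rightarrow$(1) in the pseudocompact part of Theorem~\ref{c:T2}, any complemented infinite-dimensional separable subspace of $C_p(\beta\IN)$ would again force the JNP, establishing (5). No real obstacle arises here: the corollary is engineered as a quick consequence of the preceding results, and the only calculation of substance is the verification in~(2) that $T$ is a homeomorphism, which reduces to the classical fact that disjoint subsets of $\IN$ have disjoint closures in $\beta\IN$ and that $c_0$-sequences have vanishing $\mathcal U$-limit along every free ultrafilter not concentrated on a single $A_n$.
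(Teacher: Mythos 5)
Your overall route is the same as the paper's: item (1) is quoted from \cite{BKS}, item (2) from the general remark about $c_0$-subspaces (your explicit embedding via a partition $\IN=\bigsqcup_n A_n$ is a correct verification of it), and items (3)--(5) are read off from Theorem~\ref{c:T2} (together with Corollary~\ref{c01}) and the Grothendieck-based fact, recorded after Definition~\ref{def}, that $C_p(\beta\IN)$ fails the JNP; this matches the paper's one-line proof.

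There is, however, a genuine error in your gloss on item (1): the restriction operator $R\colon C_p(\beta\IN)\to\IR^{\IN}$, $R\colon f\mapsto f{\restriction}\IN$, does \emph{not} realize the quotient onto $\ell_\infty$. Since $\IN$ is dense in $\beta\IN$, the map $R$ is injective, so if it were open onto its image it would be a topological isomorphism of $C_p(\beta\IN)$ onto the subspace $\ell_\infty$ of $\IR^{\IN}$. This is false: for a free ultrafilter $p\in\beta\IN\setminus\IN$ the evaluation $\delta_p$ is continuous on $C_p(\beta\IN)$, but the corresponding functional $a\mapsto\lim_{n\to p}a_n$ on $\ell_\infty$ is not continuous in the topology inherited from $\IR^{\IN}$, because any functional continuous in that topology satisfies an estimate $|\varphi(a)|\le C\max_{i\in F}|a_i|$ for some finite $F\subset\IN$ and hence vanishes on all $a$ supported off $F$, which the $p$-limit functional does not. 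Thus $R$ is a continuous linear bijection onto $\ell_\infty$ that is not open, and the $\ell_\infty$-quotient must be obtained from a different operator, as in the Proposition of \cite{BKS} that the paper cites; the citation alone suffices for (1), but the claim about the restriction map should be removed.
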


Indeed, the first claim follows from \cite[Proposition]{BKS}, the others  follow from Theorem \ref{c:T2} and the statement (1) after Definition \ref{def}.

In the final Section~\ref{s:CP} we shall characterize Tychonoff spaces whose function space $C_p(X)$ is Polishable and prove the following theorem.

\begin{theorem}\label{t:CP}  For a Tychonoff space $X$ the following conditions are equivalent:
\begin{enumerate}
\item $C_p(X)$ is Polishable;
\item $C_k(X)$ is Polish;
\item $X$ is a submetrizable hemicompact $k$-space.
\end{enumerate}
\end{theorem}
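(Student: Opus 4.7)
The implications $(3)\Rightarrow(2)\Rightarrow(1)$ are essentially classical. Warner's theorem characterizes the hemicompact $k$-spaces as those Tychonoff spaces for which $C_k(X)$ is Fr\'echet, and the submetrizability of $X$ forces each member of a fundamental sequence of compacta $(K_n)$ to be metrizable (a weaker Hausdorff topology coincides with the original on compacta), so $C_k(X)=\varprojlim_n C(K_n)$ is separable and hence Polish. The map $C_k(X)\to C_p(X)$ is a continuous linear surjection, giving $(2)\Rightarrow(1)$.

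For $(1)\Rightarrow(3)$, fix a continuous linear surjection $T:E\to C_p(X)$ from a separable Fr\'echet space~$E$ and consider the dual map $\phi:X\to E^*$, $\phi(x)=\delta_x\circ T$, into the weak-$*$ topology on $E^*$. Since $T(E)=C(X)$ determines the Tychonoff topology of~$X$, a direct check shows $\phi$ is a topological embedding; a countable dense $\{e_n\}\subseteq E$ then yields the pointwise-dense family $\{T(e_n)\}\subseteq C(X)$, which separates the points of~$X$, so $x\mapsto (T(e_n)(x))_n$ is a continuous injection $X\to\IR^\IN$ and $X$ is submetrizable.

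Next, fix a decreasing $0$-neighbourhood basis $(V_n)$ of~$E$; the polars $V_n^\circ$ are weak-$*$ compact, metrizable (by separability of~$E$), and exhaust~$E^*$. For any compact $K\subseteq X$, the family $\{\phi(x):x\in K\}$ is pointwise bounded on~$E$ (each $T(e)$ is bounded on~$K$), hence equicontinuous by Banach--Steinhaus, so $K\subseteq X_n:=\phi^{-1}(V_n^\circ)$ for some~$n$. Each $X_n$ is closed in~$X$ and embeds via~$\phi$ into the second countable~$V_n^\circ$, so $X$ is Lindel\"of, hence normal. The main step is the compactness of each~$X_n$: given $\phi(y_j)\to\mu$ in~$V_n^\circ$ with $y_j\in X_n$, surjectivity of~$T$ yields that $f(y_j)$ converges in~$\IR$ for every $f\in C(X)$; if $(y_j)$ had no cluster point in~$X$ then $\{y_j\}$ would be closed discrete, and the normality of~$X$ together with Tietze's extension theorem would supply $f\in C(X)$ with $f(y_{2k})=0$ and $f(y_{2k+1})=1$, contradicting convergence. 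Hence $\mu=\phi(y)$ for a cluster point $y\in X_n$, $\phi(X_n)$ is closed in~$V_n^\circ$, and $X_n$ is compact, so $X$ is hemicompact.

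Finally, for the $k$-space property, the identity $(C(X),\tau)\to C_k(X)$ is a continuous linear bijection with closed graph (both topologies refine pointwise convergence) from the Polish Fr\'echet topology~$\tau$ witnessing Polishability to the now metrizable $C_k(X)$ (Warner, using hemicompactness). A closed-graph / open-mapping argument, for instance via the ambient Fr\'echet space $\varprojlim_n C(X_n)$, forces $\tau=C_k(X)$, so $C_k(X)$ inherits Polish completeness from~$\tau$; by Warner's theorem $X$ is then a $k_R$-space, which for Tychonoff~$X$ coincides with being a $k$-space. The principal obstacle throughout is precisely the compactness of the~$X_n$, which is bridged by the Lindel\"of-normality structure that~$X$ inherits from the countable exhaustion of~$E^*$ by metrizable compacta, allowing a Tietze-type argument to rule out escape of weak-$*$ limits of Diracs out of $\phi(X)$.
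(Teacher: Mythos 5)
Your treatment of $(3)\Rightarrow(2)\Rightarrow(1)$ agrees with the paper, and your route from Polishability to submetrizability and hemicompactness is correct but genuinely different: the paper first upgrades the Polishable topology to the compact-open topology (via countable network arguments, the Nachbin--Shirota theorem and closed graph theorems) and then quotes McCoy--Ntantu to read off the topological properties of $X$, whereas you work directly in the weak$^*$ dual of the separable Fr\'echet space $E$, embed $X$ by $\phi:x\mapsto\delta_x\circ T$ into $\bigcup_n V_n^\circ$, obtain $X_n=\phi^{-1}(V_n^\circ)$ separable metrizable (hence $X$ Lindel\"of and normal), push every compact set into some $X_n$ by Banach--Steinhaus, and prove compactness of each $X_n$ by the alternating-values Tietze argument. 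These steps check out, and this part is arguably more self-contained than the paper's.

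The genuine gap is the final step, the $k$-space property, and it cannot be dismissed as routine: submetrizable hemicompact spaces need not be $k$-spaces (for instance $\IN\cup\{p\}$ with $p\in\beta\IN\setminus\IN$ is countable, hence submetrizable, and hemicompact because its compact subsets are finite, yet it is not a $k$-space), so Polishability must enter essentially here. The closed graph theorem between the Fr\'echet spaces $(C(X),\tau)$ and $\varprojlim_n C(X_n)$ gives only one inclusion, $\tau\supseteq\tau_k$ (the restriction map has closed graph, hence is continuous, and the induced topology is $\tau_k$). The reverse inclusion --- equivalently the completeness of $C_k(X)$, equivalently the closedness of the image of $C(X)$ in $\varprojlim_n C(X_n)$ --- is exactly the crux, and no naive closed-graph or open-mapping argument applies: at this point $C_k(X)$ is only known to be metrizable, not complete, Baire or barrelled, and showing that a compatible sequence $(f_n)$ glues to a \emph{continuous} function on $X$ is precisely the $k_R$-property you are trying to prove, so that route is circular. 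The paper bridges this by showing $X$ is a $\mu$-space (which you have, since $X=\bigcup_nX_n$ is Lindel\"of), hence $C_k(X)$ is barrelled by the Nachbin--Shirota theorem \cite[Theorem 10.1.20]{bonet}, and then applying the closed graph theorem for barrelled domain and Fr\'echet range to get $\tau_k\supseteq\tau$; only then is $\tau=\tau_k$ Polish. Your proof can be completed along these lines, but as written the phrase ``forces $\tau=C_k(X)$'' is unsupported. A secondary flaw: the assertion that $k_R$-spaces coincide with $k$-spaces for Tychonoff spaces is false in general (uncountable powers of $\IR$ are $k_R$ but not $k$); the paper avoids this by invoking \cite[Theorem 4.2]{McCoy}, which yields the hemicompact $k$-space property directly from the Polishness of $C_k(X)$, so you should either cite that result or justify that in your situation $k_R$ implies $k$.
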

In this theorem $C_k(X)$ denotes the space of continuous real-valued functions on $X$, endowed with the compact-open topology. It should be mentioned that a locally convex space is Polish if and only if it is a separable Fr\'echet space, by using, for example,   the Birkhoff-Kakutani theorem \cite[Theorem 9.1]{kechris}.

\section{Proof of Theorem \ref{c:T2}}
%\medskip
%\medskip
%\medskip

We start with the following

\begin{lemma}\label{main2} Let a Tychonoff space $X$ be continuously embedded into a compact Hausdorff space $K$.
 Let $(\mu_n)$ be a sequence of finitely supported sign-measures on $X$ (and so, on $K$) such that \begin{enumerate}
\item $\|\mu_n\|=1$ for all $n\in\IN$, and
\item $\mu_n(f)\to_n 0$ for all $f\in C(K)$.
\end{enumerate}
Then there exists an infinite subset $\Omega$ of $\IN$ such that
\begin{enumerate}
\item[(a)] the closed subspace $Z=\bigcap_{k\in \Omega}\{f\in C_p(X):\mu_k(f)=0\}$ of $C_p(X)$ is complemented in the subspace $L=\big\{f\in C_p(X):\lim_{k\in\Omega}\mu_k(f)=0\big\}$ of $C_p(X)$;
\item[(b)] the quotient space $L/Z$ is isomorphic to the subspace $c_0$ of $\IR^\IN$;
\item[(c)] $L$ contains a complemented subspace isomorphic to $c_0$;
\item[(d)] the quotient space $C_p(X)/Z$ is infinite-dimensional and metrizable (and so, separable).
\end{enumerate}
\end{lemma}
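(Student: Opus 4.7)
The plan is to extract an infinite $\Omega\subset\IN$ and a family $(g_k)_{k\in\Omega}\subset C(K)$ forming an approximately biorthogonal system for $(\mu_k)_{k\in\Omega}$ with pairwise disjoint closed supports $(D_k)_{k\in\Omega}$ in $K$, $\|g_k\|_\infty\le 1$, $\mu_k(g_k)=1$, and the off-diagonal entries $\mu_j(g_k)$ ($j\ne k$) rapidly summable to zero. Let $T\colon C_p(X)\to\IR^\Omega$ be the continuous linear map $T(f):=(\mu_k(f))_{k\in\Omega}$, so that $\ker T=Z$. Disjointness of the $D_k$ together with $\|g_k\|_\infty\le 1$ and $(a_k)\in c_0$ make the series $\sum_k a_kg_k$ converge uniformly on $K$ to a continuous function, so $S_0(a):=\sum_k a_kg_k$ defines a continuous linear map $S_0\colon c_0\to C_p(X)$. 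The composition $T\circ S_0$ is the bounded operator on $c_0$ with matrix $(\mu_j(g_k))_{j,k\in\Omega}$: a small perturbation of the identity (diagonal equal to $1$, row sums of off-diagonal entries small), hence invertible by a Neumann series, and $S:=S_0\circ(T\circ S_0)^{-1}\colon c_0\to L$ is then a continuous linear section of the restriction $T|_L\colon L\to c_0$. Once such a section is at hand, clauses (a)--(c) follow formally: $P:=S\circ T|_L$ is a continuous linear projection of $L$ onto $M:=S(c_0)$ with kernel $Z$, so $L=M\oplus Z$ topologically, $M\cong c_0$ via $T|_M$, $L/Z\cong c_0$, and $L$ contains the complemented copy $M$ of $c_0$.

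\textbf{The extraction (main technical step).} The subsequence $(n_k)$ and functions $g_k$ are built inductively. Having selected $n_1<\cdots<n_{k-1}$ and $g_1,\dots,g_{k-1}\in C(K)$ with pairwise disjoint closed supports $D_1,\dots,D_{k-1}$, $\|g_j\|_\infty\le 1$, $\mu_{n_j}(g_j)=1$, one uses the weak$^*$-convergence $\mu_n\to 0$ in $C(K)^*$ applied to the finitely many Urysohn-type functions on $K$ equal to $\pm 1$ on the finite set $F_{k-1}:=\bigcup_{j<k}\supp\mu_{n_j}$ (one such function for each sign-pattern on $F_{k-1}$, extended continuously to $K$ with sup-norm $1$) to select $n_k>n_{k-1}$ with $|\mu_{n_k}|(F_{k-1})<\epsilon_k$ for a prescribed $\epsilon_k>0$. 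Compact Hausdorff separation then yields a closed neighbourhood $D_k$ of $\supp\mu_{n_k}\setminus F_{k-1}$ disjoint from $D_1\cup\cdots\cup D_{k-1}$, and a continuous $g_k\in C(K)$ supported in $D_k$, $\|g_k\|_\infty\le 1$, for which $\mu_{n_k}(g_k)$ is within $\epsilon_k$ of $\|\mu_{n_k}\|_{K\setminus F_{k-1}}\approx 1$; rescale so that $\mu_{n_k}(g_k)=1$ exactly. With $(\epsilon_k)$ decaying rapidly, the off-diagonal entries of $T\circ S_0$ are summably controlled and $T\circ S_0$ is invertible on $c_0$. I expect this disjointification to be the main obstacle: weak$^*$-nullity does not directly give control of $|\mu_n|$ on a finite set, and the sign-matching in the choice of Urysohn functions has to be done carefully.

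\textbf{Continuity of $S_0$, and clause (d).} Given the pairwise disjointness of the closed sets $D_k$ and the uniform bound $\|g_k\|_\infty\le 1$, the tail $\sum_{k>N}a_kg_k$ has sup-norm at most $\sup_{k>N}|a_k|\to 0$ as $N\to\infty$ for any $(a_k)\in c_0$. Thus $S_0(a)$ is a uniform limit of continuous functions on the compact space $K$, hence continuous; in particular its restriction to $X$ lies in $C_p(X)$, and $S_0\colon c_0\to C_p(X)$ is linear and continuous because each evaluation $S_0(a)(x)$ depends on at most one coordinate of $a$. Clauses (a)--(c) then follow from the section $S$ as in the strategy paragraph. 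For clause (d), the induced linear map $\bar T\colon C_p(X)/Z\to\IR^\Omega$ is a continuous linear injection into the metrizable space $\IR^\Omega$; since the topology of $C_p(X)$ coincides with the weak topology $\sigma(C_p(X),L_p(X))$, the quotient $C_p(X)/Z$ carries $\sigma(C_p(X)/Z,Z^\perp)$, and the point is to verify (possibly after refining $\Omega$) that $Z^\perp$ is algebraically spanned by the countable family $\{\mu_k:k\in\Omega\}$, from which metrizability of the quotient follows. Infinite-dimensionality of $C_p(X)/Z$ is immediate from the embedding $L/Z\cong c_0\hookrightarrow C_p(X)/Z$.
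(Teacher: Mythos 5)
The decisive step of your construction is unachievable, and this is exactly where the paper has to work hardest. You propose to choose $n_k$ inductively so that $|\mu_{n_k}|(F_{k-1})<\e_k$, where $F_{k-1}=\bigcup_{j<k}\supp(\mu_{n_j})$, and then to find $g_k$ of norm $\le 1$, supported off the earlier data, with $\mu_{n_k}(g_k)\approx\|\mu_{n_k}\|_{K\setminus F_{k-1}}\approx 1$. Weak$^*$-nullity gives no such control, and in general no subsequence has it: take $K$ a convergent sequence $x_n\to x$ and $\mu_n=\frac12(\delta_{x_n}-\delta_x)$; then $(\mu_n)$ satisfies the hypotheses of the lemma, yet $|\mu_n|(\{x\})=\frac12$ for all $n$, so once $x\in F_1$ no later measure is small on $F_{k-1}$, and no function vanishing near $x$ can pick up mass close to $1$ from $\mu_n$. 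Your sign-pattern Urysohn trick does not repair this: smallness of $\mu_n(g)$ for $g$ matching the signs of $\mu_n$ on $F_{k-1}$ tells you nothing about $|\mu_n|(F_{k-1})$, because the part of $\mu_n$ supported off $F_{k-1}$ (of total variation up to $1$) contributes to $\mu_n(g)$ in an uncontrolled way — in the example it exactly cancels the mass at $x$. What the paper does instead is: (i) prove that $\{\mu_n\}$ is \emph{not relatively weakly compact} in $C(K)^*$, using Eberlein--\v Smulian plus the Schur property of $\ell_1(S)$ for $S=\bigcup_n\supp(\mu_n)$; (ii) invoke the Grothendieck (Dieudonn\'e--Grothendieck) characterization of relatively weakly compact sets of measures to produce an $\e>0$ and \emph{pairwise disjoint open sets} $U_n$ with $|\mu_{m_n}(U_n)|>\e$; (iii) by disjointness and a pigeonhole argument, pass to a further subsequence in which earlier measures vanish on later sets and later measures have mass $\le \e/3^k$ on earlier sets. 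The resulting ``matrix'' has diagonal entries only bounded below by $\e$ (not equal to $1$ — this is forced, as the example shows), zero entries above the diagonal, and summably small entries below; the paper then inverts it by an explicit recursion (your Neumann-series idea would serve the same purpose once one has this matrix, but nothing in your extraction produces it). So the missing ingredient is precisely the weak-noncompactness/Grothendieck localization of a definite amount $\e$ of mass on disjoint open sets, and the step as you wrote it would fail.

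Two further points. First, your formal derivation of (a)--(c) from a continuous section $S\colon c_0\to L$ of $T|_L$ is fine, as is the continuity of $S_0$. Second, your treatment of (d) is not a proof: the assertion that (after refining $\Omega$) $Z^\perp$ is the algebraic span of $\{\mu_k\colon k\in\Omega\}$ is exactly what would need to be established — in general $Z^\perp$ is only the weak$^*$ closure of that span, and the quotient topology need not be seen so cheaply. The paper instead proves first countability of $C_p(X)/Z$ directly, exhibiting a countable base $(U_n)$ of neighborhoods determined by the supports of the chosen measures and verifying $Z+U_n\subset Z+U$ by means of the section $\Phi$ already constructed; some argument of this kind is needed to close clause (d).
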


\begin{proof} (I) First we show that the set $M=\{\mu_n: n\in\IN\}$ in not relatively weakly compact in the dual of the Banach space $C(K)$. Indeed, assume by contrary that the closure $\overline{M}$ of $M$ in the weak topology of $C(K)^*$ is weakly compact. Applying the Eberlein-\v Smulian theorem \cite[Theorem 1.6.3]{AK}, we conclude that $\overline{M}$ is weakly sequentially compact. Thus $(\mu_n)$ has a subsequence $(\mu_{k_n})$ that weakly converges to some element $\mu_0\in C(K)^*$. Taking into account that the sequence $(\mu_n)$ converges to zero in the weak$^*$ topology of $C(K)^*$, we conclude that $\mu_0=0$ and hence $(\mu_{k_n})$ is weakly convergent to zero in $C(K)^*$.
Denote by $S$ the countable set $\bigcup_{n\in\IN}\supp(\mu_n)$.
The measures $\mu_n, n\in\IN,$ can be considered as elements of the unit sphere of the Banach space $\ell_1(S)\subset C(K)^*$.  By the Schur theorem \cite[Theorem 2.3.6]{AK}, the weakly convergent sequence $(\mu_{k_n})$ is convergent to zero in the norm topology of $\ell_1(S)$, which is not possible as $\|\mu_n\|=1$ for all $n\in\IN$. Thus the set $M$ is not relatively weakly compact in $C(K)^*$.
\medskip

\noindent (II) By the Grothendieck theorem \cite[Theorem 5.3.2]{AK} there exist a number $\epsilon >0$, a sequence $(m_n) \subset \IN$ and a sequence $(U_n)$ of pairwise disjoint open sets in $K$ such that $|\mu_{m_n}(U_n)|>\epsilon$ for any $n\in\IN$. Clearly, $\lim_{n\to\infty}\mu_k(U_n)=0$ for any $k\in\IN$, since $$\sum_{n\in\IN} |\mu_k|(U_n) = |\mu_k|(\bigcup_{n\in\IN} U_n) \leq |\mu_k|(K)=1.$$ Thus we can assume that the sequence $(m_n)$ is strictly increasing.

For some strictly increasing sequence $(n_k)\subset \IN$ we have $U_{n_k}\cap \supp(\mu_{m_{n_i}})=\emptyset$ for all $k,i\in\IN$ with $k>i$.

Put $\nu_k=\mu_{m_{n_k}}$ and $W_k=U_{n_k}$ for all $k\in\IN.$ Then
\medskip

(A1) $\nu_k(f)\to_k 0$ for every $f\in C(K);$
\smallskip

(A2) $|\nu_k(W_k)|>\epsilon$ for every $k\in\IN;$
\smallskip

(A3) $|\nu_k|(W_n)=0$ for all $k,n\in\IN$ with $k<n.$
\medskip

\noindent (III) By induction we shall construct a decreasing sequences $(N_k)$ of infinite subsets of $\IN$ with $\min N_k< \min N_{k+1}$ for $k\in\IN$ such that $|\nu_n|(W_m)\leq \epsilon/3^k$ for every $k\in \IN, m=\min N_k, n\in N_k$ and $n>m$. Let $N_0=\IN.$ Assume that for some $k\in\IN$ an infinite subset $N_{k-1}$ of $\IN$ has been constructed. Let $F$ be a finite subset of $N_{k-1}$ with $|F|>3^k/\epsilon$ and $\min F> \min N_{k-1}.$  For every $i\in F$ consider the set $$\Lambda_i=\{n\in N_{k-1}: |\nu_n|(W_n) \leq \epsilon/3^k\}.$$ For every $n\in N_{k-1}$ we get $|\nu_n|(X)\geq \sum_{i\in F} |\nu_n|(W_i).$ Hence there exists $i\in F$ such that $$|\nu_n|(W_i)\leq 1/|F| \leq \epsilon/3^k.$$ Thus $N_{k-1}=\bigcup_{i\in F} \Lambda_i,$ so for some $m\in F$ the set $\Lambda_m$ is infinite. Put $$N_k=\{n\in \Lambda_m: n>m\}\cup \{m\}.$$ Then $\min N_{k-1}< \min F \leq m=\min N_k$ and $|\nu_n|(W_m)\leq \epsilon/3^k$ for $n\in N_k$ with $n>m.$

\medskip

\noindent (IV) Let $i_k= \min N_k, \lambda_k=\nu_{i_k}$ and $V_k=W_{i_k}$ for $k\in \IN.$ Then

\smallskip

(B1) $\lambda_k(f)\to_k 0$  for every $f\in C(K);$
\smallskip

(B2) $|\lambda_k(V_k)|> \epsilon$ for every $k\in \IN$;
\smallskip

(B3) $|\lambda_k|(V_l)=0$ and $|\lambda_l|(V_k) \leq \epsilon/3^k$ for all $k,l\in \IN$ with $k<l.$
%Since $|\nu_{i_l}|(W_{i_k}) \leq \epsilon/2^{k+2}$ for all $k,l\in \IN$ with $k<l,$ since $i_l\in N_k$ and $i_l\neq i_k.$
\medskip

Clearly, the set $$\Omega=\{n\in\IN:\mu_n=\lambda_k\; \mbox{for some}\; k\in\IN\}$$ is infinite. Put $$Z=\bigcap_{n\in \IN} \{f\in C_p(X): \lambda_n(f)=0\}$$ and $L=\{f\in C_p(X): \lambda_n(f)\to_n 0\}.$ Clearly, $Z$ and $L$ are subspaces of $C_p(X)$ and $Z$ is closed in $L$ and in $C_p(X)$. The linear operator $$S: L \to c_0,\; S:f\mapsto \lambda_n(f),$$ is continuous and $\ker S=Z.$

We shall construct a linear continuous map $P:c_0 \to L$ such that $S\circ P$ is the identity map on $c_0$. For every $k\in \IN$ there exists a continuous function $\varphi_k: K\to [-1,1]$ such that $$\varphi_k(s)=\lambda_k(V_k)/|\lambda_k(V_k)|$$ for $s\in V_k \cap \supp(\lambda_k)$ and $\varphi_k(s)=0$ for $s\in (K\setminus V_k).$ Then $$\lambda_k(\varphi_k)=|\lambda_k(V_k)|>\epsilon,$$ $\lambda_n(\varphi_k)=0$ for all $n,k \in \IN$ with $n<k$ and $$|\lambda_n(\varphi_k)| \leq |\lambda_n|(V_k)\leq \epsilon/ 3^k$$ for all $n,k \in \IN$ with $n>k.$
\medskip

\noindent (V) Let $(x_n)\in c_0$. Define a sequence $(x'_n)\in\IR^{\mathbb N}$ by  the recursive formula $$ x'_n:=[x_n-\sum_{1\leq k<n}x'_k \lambda_n(\varphi_k)]/\lambda_n(\varphi_n)\mbox{ \ for \ } n\in \IN.$$ We shall prove that $(x'_n)\in c_0$.

First we show that $\sup_n |x'_n|<\infty$. Since $(x_n)\in c_0$, there exists $m\in \IN$ such that $\sup_{n\geq m}|x_n|<\epsilon.$ Put $$M_n=\max \{2, \max_{1\leq k<n} |x'_k|\}$$ for $n\geq 2.$ For every $n>m$ we get $$|x'_n|=|x_n-\sum_{1\leq k<n}x'_k\lambda_n(\varphi_k)|/\lambda_n(\varphi_n)\leq [\epsilon +M_n\sum_{1\leq k<n} \epsilon/3^k]/\epsilon \leq 1+M_n/2\leq M_n.$$ Hence $M_{n+1}=\max \{M_n, |x'_n|\}\leq M_n$ for $n>m.$ Thus $d:=\sup_n |x'_n|\leq M_{m+1}<\infty.$
\smallskip

Now we show that $x'_n\to_n 0$.  Given any $\delta>0$, find $v\in \IN$ such that $d<3^v\delta$. Since $(x_n)\in c_0$ and $\lambda_n(f)\to_n 0$ for any $f\in C(K)$,  there exists $m>v$ such that for every $n\ge m$  $$|x_n|<\delta\epsilon\mbox{ \ and \ }d\sum_{1\leq k\leq v} |\lambda_n(\varphi_k)|<\delta\epsilon.$$ Then for $n\geq m$  we obtain
\begin{multline*}|x'_n|\leq \big[|x_n|+ \sum_{1\leq k\leq v} |x'_k|{\cdot}|\lambda_n(\varphi_k)| + \sum_{v< k<n} |x'_k|{\cdot}|\lambda_n(\varphi_k)|\big]/\lambda_n(\varphi_n)\leq\\
\leq\big[\delta\epsilon +\sum_{1\leq k\leq v} d{\cdot}|\lambda_n(\varphi_k)| + \sum_{v< k<n} d{\cdot}|\lambda_n(\varphi_k)|\big]/\epsilon <\delta + \delta +\sum_{v< k<n} d/3^k< 2\delta +d/3^v<3\delta.
\end{multline*}
 Thus $(x'_n)\in c_0.$
\smallskip

Clearly, the operator $$\Theta: c_0 \to c_0,\;\;\Theta:(x_n) \mapsto (x'_n),$$ is linear and continuous. We  prove that $\Theta$ is surjective. Let $(y_n)\in c_0$.  Set $t=\sup_n |y_n|.$ Let $$x_n=\sum_{k=1}^n \lambda_n (\varphi_k) y_k\mbox{ \  for \ }n\in \IN.$$ First we show that $(x_n)\in c_0.$
Given any $\delta>0$, find $v\in \IN$ with $\epsilon t<\delta 3^v$. Clearly, there exists $m>v+2$ such that $|y_n|<\delta$ and $\sum_{k=1}^v t|\lambda_n(\varphi_k)|<\delta$ for $n\geq m.$ Then for every $n\geq m$ we obtain \begin{multline*}|x_n|\leq \sum_{k=1}^n |\lambda_n(\varphi_k)|{\cdot}|y_k| \leq \sum_{k=1}^v t{\cdot}|\lambda_n(\varphi_k)| + \sum_{v<k<n} t{\cdot}|\lambda_n(\varphi_k)| + |\lambda_n(\varphi_n)|{\cdot}|y_n|<\\
<\delta + \sum_{v<k<n} t\epsilon /3^k + ||\lambda_n||{\cdot}|y_n|< \delta +t\epsilon/3^v +\delta<3\delta.
\end{multline*}
Thus $(x_n)\in c_0.$ Clearly, $\Theta ((x_n)_{n\in\mathbb N})=(y_n)_{n\in\mathbb N};$ so $\Theta$ is surjective.
\medskip

(VI) The operator $$T: c_0 \to C_p(X),\;\; T:(x_n)\mapsto \sum_{n=1}^{\infty} x_n{\cdot}\varphi_n|X,$$ is well-defined, linear and continuous, since the functions $\varphi_n, n\in \IN,$ have pairwise disjoint supports and $\varphi_n(X)\subset [-1,1], n\in \IN.$ Thus the linear operator $$\Phi=T\circ \Theta: c_0 \to C_p(X)$$ is continuous.

Let $x=(x_k) \in c_0$ and $x'=(x'_k)=\Theta(x).$ Then $$\Phi(x)=T(x')=\sum_{k=1}^{\infty} x'_k \varphi_k|X.$$ Using (B3) and the definition of $\Theta$, we get for every $n\in \IN$ $$\lambda_n(\Phi(x))=\sum_{k=1}^{\infty} x'_k\lambda_n(\varphi_k)=x'_n\lambda_n(\varphi_n)+ \sum_{1\leq k<n} x'_k \lambda_n(\varphi_k)=$$ $$(x_n-\sum_{1\leq k<n} x'_k \lambda_n(\varphi_k)) +\sum_{1\leq k<n} x'_k \lambda_n(\varphi_k)=x_n;$$ so $\lambda_n(\Phi(x))\to_n 0.$ This implies that $\Phi(x)\in L$ and $S\circ \Phi(x)=x$ for every $x\in c_0.$ Therefore the operator $P:=\Phi\circ S{\colon}L\to L$ is a continuous linear projection with $\ker P=\ker S=Z.$ Thus the subspace $Z$ is complemented in $L.$ Since $S\circ\Phi$ is the identity map on $c_0$, the map $S: L\to c_0$ is open. Indeed, let $U$ be a neighbourhood of zero in $L;$ then $V=\Phi^{-1}(U)$ is a  neighbourhood of zero in $c_0$ and $$V=S\circ \Phi (V) \subset S(U).$$ Thus the quotient space $L/Z$ is topologically isomorphic to $c_0$ and $\Phi(c_0)$ is a complemented subspace of $L$, isomorphic to $c_0$. In particular, $Z$ has infinite codimension in $L$ and in $C_p(X).$
\medskip

(VII) Finally we  prove that the quotient space $C_p(X)/Z$ is first countable and hence metrizable. Let $$U_n=\{f\in C_p(X): |f(x)|<1/n\;\mbox{for every}\;x\in \bigcup_{k=1}^n \supp (\lambda_k)\}, n\in \IN.$$ The first countability of the quotient space $C_p(X)/Z$ will follow as soon as for every neighbourhood $U$ of zero in $C_p(X)$ we find $n\in \IN$ with $Z+U_n \subset Z+U.$ Clearly we can assume that $$U=\bigcap_{x\in F}\{f\in C_p(X): |f(x)|<\delta\}$$ for some finite subset $F$ of $X$ and some $\delta >0.$

By the continuity of the operator $\Phi: c_0 \to C_p(X),$ there exists $n\in \IN$ such that for any $y=(y_k)\in c_0$ with $$\max_{1\leq k \leq n}|y_k| \leq 1/n$$ we get $\Phi(y)\in \frac12U$. Replacing $n$ by a larger number, we can assume that $\frac1n<\frac12\delta$ and $$F\cap \bigcup_{k=1}^{\infty}\supp(\lambda_k) \subset \bigcup_{k=1}^n \supp(\lambda_k).$$ Let $f\in U_n.$ Choose a function $h\in C_p(K)$ such that $h(x)=f(x)$ for every $$x\in F\setminus \bigcup_{k=1}^{\infty}\supp(\lambda_k)$$ and $h(x)=0$ for every $x\in \bigcup_{k=1}^n \supp (\lambda_k)$. Put $g=h|X.$ Then $g\in L$, since $\lambda_k (g)=\lambda_k (h)\to_k 0$. Put $y=S(g)$ and $\xi=\Phi(y).$ Since $g(x)=0$ for $$x\in \bigcup_{k=1}^n \supp \lambda_k,$$ we have $|\lambda_k(g)|=0<\frac1n$ for $1\leq k \leq n$, so $\max_{1\leq k \leq n} |y_k|< \frac1n$. Hence $\xi=\Phi(y) \in \frac12U,$ so $\max_{x\in F} |\xi (x)|<\frac12\delta.$ For $\varsigma=g-\xi$ we obtain $$S(\varsigma)=S(g)-S\circ \Phi \circ S (g)= S(g)-S(g)=0,$$ so $\varsigma \in Z.$ Moreover $f-\varsigma \in U.$ Indeed, we have $$|f(x)-\varsigma (x)|=|f(x)-g(x)+\xi (x)|=|\xi (x)|<\delta$$ for $x\in F\setminus \bigcup_{k=1}^{\infty}\supp (\lambda_k)$ and $$|f(x)-\varsigma (x)|=|f(x)-g(x)+\xi (x)|\leq |f(x)|+|g(x)|+|\xi (x)|<\tfrac1n+0+\tfrac12\delta<\delta$$ for every $x\in \bigcup_{k=1}^n \supp (\lambda_k)$. Thus $f=\varsigma + (f-\varsigma) \in Z+U,$ so $U_n \subset Z+U.$ Hence $Z+U_n \subset Z+U.$
\end{proof}

\begin{lemma}\label{l:MS} Let $X$ be a Tychonoff space. Each metrizable continuous image of $C_p(X)$ is separable.
\end{lemma}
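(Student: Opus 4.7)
The plan is to derive separability from the countable chain condition (ccc), as already signalled by the introductory remark in the paper that every metrizable quotient of $C_p(X)$ is separable. The present lemma merely upgrades ``quotient'' to arbitrary ``continuous image,'' and the same chain of implications works.

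First I would note that the Tychonoff product $\IR^X$ has countable cellularity: this is a classical consequence of the fact that any product of separable spaces is ccc (a standard $\Delta$-system argument on basic open boxes, or a direct appeal to Hewitt--Marczewski--Pondiczery for the separability of such products when $|X|\leq\mathfrak c$, combined with the ccc transfer in the general case). Next, $C_p(X)$ is a dense linear subspace of $\IR^X$, and the ccc is inherited by dense subspaces (if a family of pairwise disjoint open subsets of $C_p(X)$ were uncountable, intersecting each member with its ambient relatively open enlargement in $\IR^X$ would yield an uncountable pairwise disjoint family of open sets in $\IR^X$). Therefore $C_p(X)$ is ccc.

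Now let $q\colon C_p(X)\to M$ be a continuous surjection onto a metrizable space $M$. The continuous image of a ccc space is ccc: if $\{V_\alpha\}_{\alpha<\omega_1}$ were an uncountable disjoint family of nonempty open subsets of $M$, then $\{q^{-1}(V_\alpha)\}_{\alpha<\omega_1}$ would be an uncountable disjoint family of nonempty open subsets of $C_p(X)$, contradicting the ccc of $C_p(X)$. Hence $M$ is ccc.

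Finally, I would invoke the classical fact that a metrizable ccc space is separable: any metric space has a $\sigma$-discrete base by the Bing--Nagata--Smirnov theorem, and each discrete family of nonempty open sets in a ccc space must be countable, so the base is countable, whence $M$ is second countable and in particular separable. The only place where something might look like an obstacle is verifying that the ccc transfers from $\IR^X$ to its dense subspace $C_p(X)$, but this is immediate from the definition of the subspace topology; everything else is bookkeeping on well-known topological facts.
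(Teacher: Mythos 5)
Your proposal is correct and follows essentially the same route as the paper: countable cellularity of $\IR^X$, transfer to the dense subspace $C_p(X)$, preservation under continuous images, and the classical fact that a ccc metrizable space is separable (which the paper cites from Engelking rather than deriving via a $\sigma$-discrete base). The only cosmetic point is the phrasing of the dense-subspace step: one takes, for each disjoint open set in $C_p(X)$, an open set of $\IR^X$ tracing it, and density forces these ambient open sets themselves to be pairwise disjoint.
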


\begin{proof}  It is well-known \cite[2.3.18]{Eng} that the Tychonoff product $\IR^X$ has {\em countable cellularity}, which means that $\IR^X$ contains no uncountable family of pairwise disjoint non-empty open sets. Then the dense subspace $C_p(X)$ of $\IR^X$ also has countable cellularity and so does any continuous image $Y$ of $C_p(X)$. If $Y$ is metrizable, then $Y$ is separable according to Theorem 4.1.15 in \cite{Eng}.
\end{proof}

\begin{lemma}\label{l:Polish} Let $X$ be a pseudocompact space. A closed linear subspace $S$ of $C_p(X)$ is separable if and only if $S$ is Polishable.
\end{lemma}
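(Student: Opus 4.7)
The implication Polishable $\Rightarrow$ separable is immediate: if $\tau'$ is a stronger separable Fr\'echet LCS topology on $S$, then the identity $(S,\tau')\to (S,\tau_{C_p(X)})$ is a continuous surjection, and continuous images of separable spaces are separable.

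For the converse, the plan is to exploit pseudocompactness of $X$ to equip $S$ with a stronger Banach topology and then show that this Banach topology is separable. Since $X$ is pseudocompact, every continuous real-valued function on $X$ is bounded, so $C(X)=C_b(X)$, and the restriction map $C(\beta X)\to C(X)$, $\tilde f\mapsto \tilde f|_X$, is a linear bijection that becomes an isometry once $C(X)$ is endowed with the sup norm $\|\cdot\|_\infty$. Under this identification $S$ sits inside the Banach space $C(\beta X)$. The sup-norm topology $\tau_\infty$ on $S$ is finer than $\tau_{C_p(X)}$, and because $\tau_\infty\supset\tau_{C_p(X)}$ the $\tau_{C_p(X)}$-closed set $S$ is also $\tau_\infty$-closed in $C(\beta X)$; hence $(S,\|\cdot\|_\infty)$ is a Banach space, in particular a Fr\'echet LCS, whose topology is stronger than $\tau_{C_p(X)}$.

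It remains to prove that $(S,\|\cdot\|_\infty)$ is separable. Fix a countable $\tau_{C_p(X)}$-dense subset $D=\{f_n\}_{n\in\IN}$ of $S$ and let $V$ be the norm-closure of $\mathrm{span}_{\mathbb{Q}}(D)$ in $C(\beta X)$. Then $V\subseteq S$ is a separable closed linear subspace of $(S,\|\cdot\|_\infty)$, and it suffices to show $V=S$. Suppose for contradiction that some $f_0\in S\setminus V$ exists. By Hahn--Banach applied in the Banach space $C(\beta X)$, there is a signed Radon measure $\mu\in M(\beta X)$ with $\mu(V)=\{0\}$ and $\mu(f_0)\neq 0$. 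Since $\beta X$ is compact, $C_p(\beta X)$ is angelic by Grothendieck's theorem; combined with pseudocompactness of $X$, which permits passing from bounded pointwise convergence on the dense subspace $X$ to bounded pointwise convergence on all of $\beta X$, one extracts a uniformly bounded sequence $(g_k)\subset\mathrm{span}_{\mathbb{Q}}(D)$ with $g_k\to f_0$ pointwise on $\beta X$. Dominated convergence then yields $0=\mu(g_k)\to\mu(f_0)\neq 0$, a contradiction. Hence $V=S$, so $(S,\|\cdot\|_\infty)$ is separable and provides the stronger separable Fr\'echet topology certifying Polishability.

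The principal obstacle is the extraction of this uniformly bounded approximating sequence $(g_k)$ converging to $f_0$ pointwise on all of $\beta X$; it requires a careful combination of truncation arguments (to force uniform boundedness on the approximating family drawn from $\mathrm{span}_{\mathbb{Q}}(D)$), the angelicity of $C_p(\beta X)$ (to replace general nets by sequences), and the pseudocompactness of $X$ (which is what allows bounded pointwise convergence on $X$ to propagate to bounded pointwise convergence on $\beta X$, setting up the dominated-convergence step).
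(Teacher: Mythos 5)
Your reduction to norm-separability is sound and matches the paper's choice of candidate topology: for pseudocompact $X$ the sup-norm is defined on all of $C(X)$, it refines $\tau_p$, and a $\tau_p$-closed subspace $S$ is norm-closed, so $(S,\|\cdot\|_\infty)$ is a Banach space with a finer topology. But your route to separability --- showing that the norm-closure $V$ of $\mathrm{span}_{\mathbb Q}(D)$ of an \emph{arbitrary} countable $\tau_p$-dense set $D\subset S$ equals $S$ --- attempts to prove a statement that is false, so the ``principal obstacle'' you flag (extracting a uniformly bounded sequence in $\mathrm{span}_{\mathbb Q}(D)$ converging pointwise to $f_0$) is not a technical difficulty but an impossibility. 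Concretely, take $X=[0,1]$ (compact, so $\beta X=X$ and all extension and angelicity issues disappear), $S=C[0,1]$, and let $D$ be a countable norm-dense subset of the hyperplane $V_0=\{f\in C[0,1]:\int_0^1 f\,d\lambda=0\}$. No nonzero finitely supported sign-measure annihilates $V_0$ (it would have to be a scalar multiple of $\lambda$), so by Hahn--Banach in $C_p[0,1]$ the subspace $V_0$ is $\tau_p$-dense, and hence so is $D$; yet $V\subseteq V_0\subsetneq S$. For $f_0\equiv 1$ there is no uniformly bounded sequence in $V_0$, let alone in $\mathrm{span}_{\mathbb Q}(D)$, converging to $f_0$ pointwise --- dominated convergence itself forbids it --- which is exactly the step your argument requires. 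The underlying problem is that $\tau_p$-density gives no control whatsoever on the norms of approximants (it is much weaker than weak density in $C(\beta X)$, so Mazur-type arguments do not apply), and Grothendieck/angelicity arguments do not help because norm-bounded sets need not be relatively compact in $C_p(\beta X)$.

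The paper uses the countable dense set in a structurally different way: instead of norm-approximation, it factors through a metrizable compactum. Let $f=(f_n)_{n\in\IN}:X\to\IR^{\IN}$ be the diagonal map of the dense family; pseudocompactness makes $M:=f(X)$ compact metrizable, and a second use of pseudocompactness shows that $\{\varphi\circ f:\varphi\in C(M)\}$ is a \emph{closed} subspace of $C_p(X)$. Since it contains every $f_n$, it contains $S$, so $S$ becomes a $\tau_p$-closed subspace of $C_p(M)$, hence a closed subspace of the separable Banach space $C(M)$ (note $\sup_{x\in X}|\varphi(f(x))|=\sup_{y\in M}|\varphi(y)|$). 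This is what certifies that $(S,\|\cdot\|_\infty)$ is a separable Fr\'echet topology refining $\tau_p$: the norm-separability of $S$ is true, but it comes from the factorization $S\subset C(M)$, not from the closed linear span of $D$.
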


\begin{proof} If $S$ is Polishable, then $S$ is separable, being a continuous image of a separable Fr\'echet locally convex space. Now assume that $S$ is separable.
Fix a countable dense subset $\{f_n\}_{n\in\mathbb N}$ in $S$ and consider the continuous map
$$f:X\to\IR^\IN,\;\;f:x\mapsto (f_n(x))_{n\in\IN}.$$
By the pseudocompactness of $X$ and the metrizability of $\IR^\IN$, the image $M:=f(X)$ is a compact metrizable space. The continuous surjective map $f:X\to M$ induces an isomorphic embedding $$C_pf:C_p(M)\to C_p(X),\;\;C_pf:\varphi\mapsto \varphi\circ f.$$ So, we can identify the space $C_p(M)$ with its image $C_pf(C_p(M))$ in $C_p(X)$. We claim that $C_p(M)$ is closed in $C_p(X)$. Given any function $\varphi\in C_p(X)\setminus C_p(M)$, we should find a neighborhood $O_\varphi\subset C_p(X)$ of $\varphi$, which is disjoint with $C_p(M)$.

We claim that there exist points $x,y\in X$ such that $f(x)=f(y)$ and $\varphi(x)\ne\varphi(y)$. In the opposite case, $\varphi=\psi\circ f$ for some bounded function $\psi:M\to\IR$. Let us show that the function $\psi$ is continuous. Consider the continuous map $$h:X\to M\times \IR,\,\,\, h:x\mapsto (f(x),\varphi(x)).$$ The pseudocompactness of $X$ implies that the image $h(X)\subset M\times\IR$ is a compact closed subset of $M\times\IR$.
Let $\pr_M:h(X)\to M$ and $\pr_\IR:h(X)\to\IR$ be the coordinate projections.
It follows that $$\pr_\IR\circ h=\varphi=\psi\circ f=\psi\circ \pr_M\circ h,$$ which implies that $\pr_\IR=\psi\circ \pr_X$. The map $\pr_M:h(X)\to M$ between the compact metrizable spaces $h(X)$ and $M$ is closed and hence is quotient. Then the continuity of the map $\pr_\IR=\psi\circ\pr_X$ implies the continuity of $\psi$. Now we see that the function $\varphi=\psi\circ f$ belongs to the subspace $C_p(M)\subset C_p(X)$, which contradicts the choice of $\varphi$. This contradiction shows that $\varphi(x)\ne\varphi(y)$ for some points $x,y\in X$ with $f(x)=f(y)$.
Then $$O_\varphi:=\{\phi\in C_p(X):\phi(x)\ne\phi(y)\}$$ is a required neighborhood of $\varphi$, disjoint with $C_p(M)$.

Therefore the subspace $C_p(M)$ of $C_p(X)$ is closed and hence $C_p(M)$ contains the closure $S$ of the dense set $\{f_n\}_{n\in\mathbb N}$ in $S$. Since the space $C_p(M)$ is Polishable, so is its closed subspace $S$.
\end{proof}

Now we are at the position to prove the main Theorem \ref{c:T2}:
\smallskip
\begin{proof}[Proof of Theorem \ref{c:T2}] First, for a Tychonoff space $X$ we prove the equivalence of the conditions:
\begin{enumerate}
\item $C_{p}(X)$ satisfies the JNP;
\item  $C_p(X)$ contains a complemented  subspace isomorphic to $c_0$;
\item $C_{p}(X)$ has a quotient isomorphic to $c_0$.
\end{enumerate}

The implication $(1)\Ra(2)$ follows from Lemma~\ref{main2}, applied to the Stone-\v Cech compactification $K=\beta X$ of $X$. The implication $(2)\Ra(3)$ is trivial.
\smallskip

 To prove the implication $(3)\Ra(1)$, assume that $C_p(X)$ has a quotient isomorphic to $c_0$. Then it admits an open continuous linear operator $T:C_p(X)\to c_0$. Let $$\{e^*_n\}_{n\in\mathbb{N}}\subset c_0^*$$ be the sequence of coordinate functional. By definition of $c_0$, $e^*_n(y)\to_n 0$ for every $y\in c_0$. %Then also $e^*_n(Tf)\to_n 0$ for every $f\in C_p(X)$.
 For every $n\in\IN$ consider the linear continuous functional $$\lambda_n\in C_p(X)^*,\;\;\lambda_n:f\mapsto \lambda_n(f),$$ which can be thought as a finitely supported sign-measure on $X$. It follows that for every $f\in C_p(X)$ we have $\lambda_n(f)=e^*_n(Tf)\to_n 0$. If $\|\lambda_n\|\not\to_n 0$, then we can find an infinite subset $\Omega\subset\mathbb{N}$ such that $\inf_{n\in\Omega}\|\lambda_n\|>0$. For every $n\in\Omega$ put $$\mu_n:=\frac{\lambda_n}{\|\lambda_n\|}\in C_p(X)^*$$ and observe that the sequence $\{\mu_n\}_{n\in\IN}$ witnesses that the function space $C_p(X)$ has the JNP.

It remains to consider the case when $\|\lambda_n\|\to_n 0$. We are going to  prove that the assumption $\|\lambda_n\|\to_n 0$ leads to a  contradiction.

First we show that the union $S:=\bigcup_{n\in\IN}\supp(\lambda_n)$ is bounded in $X$ in the sense that for any continuous function $\varphi:X\to[0,+\infty]$ the image $\varphi(S)$ is bounded in $\IR$. To derive a contradiction, assume that for some function $\varphi\in C_p(X)$ the image $\varphi(S)$ is unbounded. Then we can find an increasing number sequence $(n_k)_{k\in\mathbb{N}}$ such that $$\max \varphi(\supp(\lambda_{n_k}))>3+\max \varphi(\supp(\lambda_{n_i}))\mbox{ \ \  for any \ \ }i<k.$$
For every $k\in\mathbb{N}$ choose a point $x_k\in\supp(\lambda_{n_k})$ with $$\varphi(x_k)=\max\varphi(\supp(\lambda_{n_k})).$$ It follows that $\varphi(x_k)>3+\varphi(x_i)$ for every $i<k$. Since the space $X$ is Tychonoff, for every $k\in\IN$ we can find an open neighborhood $U_k\subset \{x\in X:|\varphi(x)-\varphi(x_k)|<1\}$ of $x_k$ such that $U_k\cap \supp(\lambda_{n_k})=\{x_k\}$. Also find a continuous function $\psi_k:X\to[0,1]$ such that $\psi_k(x_k)=1$ and $\psi_k(X\setminus U_k)\subset\{0\}$.

Inductively, choose a sequence of positive real numbers $(r_k)_{k\in\IN}$ such that for every $k\in\IN$
$$r_k{\cdot}|\lambda_{n_k}(\{x_k\})|>1+\sum_{i<k}\sum_{x\in \supp(\lambda_{n_k})\setminus\{x_k\}}r_i{\cdot}\psi_i(x){\cdot}|\lambda_{n_k}(\{x\})|.$$
Since the family $(U_k)_{k\in\mathbb{N}}$ is discrete,  the function $$\psi:X\to\IR, \,\,\,\psi:x\mapsto \sum_{k=1}^\infty r_k\psi_k(x)$$ is well-defined and continuous. It follows that for every $k\in\IN$ and $i>k$ we have $U_i\cap\supp(\lambda_{n_k})=\emptyset$ and hence
\begin{multline*}
|\lambda_{n_k}(\psi)|=|\sum_{i\le k}r_i{\cdot}\lambda_{n_k}(\psi_i)|\ge r_k{\cdot}|\lambda_{n_k}(\psi_k)|-\sum_{i<k}r_i{\cdot}|\lambda_{n_k}(\psi_i)|\ge\\
\ge r_k{\cdot}|\lambda_{n_k}(\{x_k\})|-\sum_{i<k}\sum_{x\in\supp(\lambda_{n_k})\setminus\{x_k\}}r_i{\cdot}\psi_i(x)\cdot|\lambda_{n_k}(\{x\})|>1.
\end{multline*}
But this contradicts $\lambda_n(\psi)\to_n 0$. This contradiction shows that the set $S=\bigcup_{k\in\IN}\supp(\lambda_k)$ is bounded in $X$ and so is its closure $\bar S$ in $X$.

Consider the space $$C_p(X{\restriction}\bar S)=\{f{\restriction}\bar S:f\in C_p(X)\}\subset \IR^{\bar S}$$ and observe that the restriction operator $R:C_p(X)\to C_p(X{\restriction}\bar S;X)$, $R:f\mapsto f{\restriction}\bar S$, is continuous and open. Each sign-measure $\lambda_n$ has support $$\supp(\lambda_n)\subset S\subset \bar S$$ and hence can be considered as a linear continuous functional on $C_p(X{\restriction}\bar S)$. Then the operator $\tilde T:C_p(X{\restriction}S)\to c_0$, $\tilde T:f\mapsto (\lambda_n(f))_{n\in\IN}$, is well-defined and linear. Since $T=\tilde T\circ R$ is open and continuous, so is the operator $\tilde T:C_p(X{\restriction}\bar S)\to c_0$. Let $C(X{\restriction}\bar S)$ denote the space $C_p(X{\restriction}\bar S)$ endowed with the sup-norm $$\|f\|_\infty=\sup_{x\in \bar S}|f(x)|.$$ This norm is well-defined since the set $\bar S$ is bounded in $X$. The completion $\overline{C(X{\restriction}\bar S)}$ of the normed space $C(X{\restriction}\bar S)$ can be identified with a closed subspace of the Banach space $C_b(\bar S)$ of bounded continuous functions on $\bar S$, endowed with the sup-norm. It follows from $\|\lambda_n\|\to_n 0$ and $\lambda_n(f)\to_n 0$ for all $f\in C(X{\restriction}\bar S)$ that $\lambda_n(f)\to_n 0$ for all $$f\in  \overline{C(X{\restriction}\bar S)}\subset C_b(\bar S).$$

So, $\Lambda:\overline{C(X{\restriction}\bar S)}\to c_0$, $\Lambda:f\mapsto(\lambda_n(f))_{n\in\IN},$ is a well-defined continuous operator such that $T=\Lambda\circ R$. It follows that the operator $$\Lambda: \overline{C(X{\restriction}\bar S)}\to (c_0,\|\cdot\|)$$ to $c_0$ endowed with its standard norm $\|x\|=\sup_{n\in\IN}|e_n(x)|$ has closed graph and hence is continuous and open (being surjective). Then the image $\Lambda(B_1)$ of the unit ball $$B_1=\{f\in \overline{C(X{\restriction}\bar S)}:\|f\|_\infty<1\}$$ contains some closed $\e$-ball $B_\e:=\{x\in c_0:\|x\|\le\e\}$  in the Banach space $(c_0,\|\cdot\|)$. Since $\|\lambda_n\|\to_n 0$, we can find $n\in\IN$ such that $\|\lambda_n\|<\e$. Next, find an element $y\in B_\e\subset c_0$ such that $\|y\|=e^*_n(y)=\e$. Since $y\in B_\e\subset\Lambda(B_1)$, there exists a point $x\in B_1$ such that $\Lambda(x)=y$. Then $$\e=e^*_n(y)=\lambda_n(x)\le\|\lambda_n\|{\cdot}\|x\|<\e$$ and this contradiction completes the proof of the implication $(3)\Ra(1)$.
\smallskip

Now assuming that the space $X$ is pseudocompact, we shall prove that the conditions $(1)$--$(3)$ are equivalent to
\begin{enumerate}
\item[(4)] $C_p(X)$ contains a complemented infinite-dimensional metrizable subspace;
\item[(5)] $C_{p}(X)$ contains a complemented infinite-dimensional separable subspace;
\item[(6)] $C_p(X)$ has an infinite-dimensional Polishable quotient.
\end{enumerate}
\smallskip

It suffices to prove the implications $(2)\Ra(4)\Ra(5)\Ra(6)\Ra(1)$.
The implication $(2)\Ra(4)$ is trivial and $(4)\Ra(5)\Ra(6)$ follow from Lemmas~\ref{l:MS} and \ref{l:Polish}, respectively.
\smallskip

$(6) \Rightarrow (1)$: Assume that the space $C_p(X)$ contains a closed subspace $Z$ of infinite codimension such that the quotient space $E:=C_p(X)/Z$ is Polishable. Denote by $\tau_p$ the quotient topology of $C_p(X)/Z$ and by $\tau_0\supset\tau_p$ a stronger separable Fr\'echet locally convex topology on $E$.
Denote by $\tau_\infty$ the topology of the quotient Banach space $C(X)/Z$. Here  $C(X)$ is endowed with the sup-norm $\|f\|_\infty:=\sup_{x\in X}|f(x)|$ (which is well-defined as $X$ is pseudocompact).

 The identity maps between $(E, \tau_0)$ and $(E, \tau_{\infty})$ have closed graphs, since $\tau_p \subset \tau_0 \cap \tau_{\infty}.$ Using the Closed Graph Theorem we infer that the topologies $\tau_0$ and $\tau_{\infty}$ are equal. Let $G$ be a countable subset of $C(X)$ such that the set $\{g+Z: g\in G\}$ is dense in the Banach space $C(X)/Z$. Then the set $$G+Z=\{g+z:g\in G,\; z\in Z\}$$ is dense in $C(X)$. Let $(g_n)_{n\in\mathbb N}$ be a linearly independent sequence in $G$ such that its linear span $G_0$ has $G_0\cap Z=\{0\}$ and $G_0+Z=G+Z$. Let $f_1=g_1$ and $\nu_1 \in C_p(X)^*$ with $\nu_1|Z=0$ such that $\nu_1(f_1)=1.$

Assume that for some $n\in \IN$ we have chosen $$(f_1, \nu_1), \ldots, (f_n, \nu_n) \in C_p(X)\times C_p(X)^*$$ such that $$\mbox{lin}\{f_1, \ldots, f_n\}=\mbox{lin}\{g_1, \ldots, g_n\}$$ and $$\nu_j|Z=0,\;\; \nu_j(f_i)=\delta_{j,i}\mbox{ \  for all  }i,j\in \{1, \ldots, n\}.$$

Put $$f_{n+1}=g_{n+1}-\sum_{i=1}^n \nu_i(g_{n+1})f_i.$$ Then $$\mbox{lin}\{f_1, \ldots, f_{n+1}\}=\mbox{lin}\{g_1, \ldots, g_{n+1}\}$$ and $\nu_j(f_{n+1})=0$ for $1\leq j \leq n.$ Let $\nu_{n+1}\in C_p(X)^*$ with $\nu_{n+1}|Z=0$ such that $\nu_{n+1}(f_i)=0$ for $1\leq i \leq n$ and $\nu_{n+1}(f_{n+1})=1.$

Continuing on this  way we can construct inductively a biorthogonal sequence $\big((f_n, \nu_n)\big)_{n\in\mathbb N}$ in $C_p(X)\times C_p(X)^*$ such that $\mbox{lin} \{f_n: n\in \IN\}= \mbox{lin} \{g_n: n\in \IN\}$ and $\nu_n|Z=0$, $\nu_n(f_m)=\delta_{n,m}$ for all $n,m \in \IN.$ Then $\mbox{lin} \{f_n: n\in \IN\} +Z$ is dense in $C_u(X)$. Let $\mu_n=\nu_n/\|\nu_n\|$ for $ n\in \IN.$ Then $\|\mu_n\|=1$ and $\mu_n(f_m)=0$ for all $n,m \in \IN$ with $n\neq m.$

We  prove that $\mu_n(f)\to_n 0$ for every $f\in C_p(X).$ Given any $f\in C(X)$ and $\varepsilon>0$, find $m\in \IN$ and $g\in \mbox{lin}\{f_1, \ldots, f_m\} +Z$ with $d(f,g)<\varepsilon;$ clearly $d(f,g)=\|f-g\|_{\infty}.$ Then $\mu_n(g)=0$ for $n>m,$ so $$|\mu_n(f)|=|\mu_n(f-g)| \leq \|\mu_n\|{\cdot}\|f-g\|_{\infty}<\varepsilon$$ for $n>m.$ Thus $\mu_n(f)\to_n 0$, which means that the space $C_p(X)$ has the JNP.
\end{proof}

%Observe that $C_p(X)$ is Polishable if $X$ is ???? [locally compact separable metrizable space $X$ the function space]

\section{An example of Plebanek}

In this section we describe the following example suggested to the authors by Grzegorz Plebanek \cite{plebanek}.

\begin{example}[Plebanek]\label{exa}
There exists a compact Hausdorff space $K$ such that
\begin{enumerate}
\item $K$ contain no nontrivial converging sequences but contains a copy of $\beta\mathbb{N}$;
\item the function space $C_p(K)$ has the JNP.
\end{enumerate}
\end{example}

We need some facts to present the construction of the space $K$. By definition, the {\em asymptotic density}  of a subset $A\subset \mathbb N$ is the limit $$d(A):=\lim_{n\to\infty}\frac{A\cap[1,n]}{n}$$if this limit exists. The family $\mathcal Z=\{A\subset\mathbb N:d(A)=0\}$ of sets of asymptotic density zero in $\mathbb N$ is an ideal on $\mathbb N$. Recall the following standard fact (here $A\subset^* B$ means that $A\setminus B$ is finite).
\medskip

\textbf{Fact 1}: \label{l1}
\emph{For any countable subfamily $\mathcal C\subset\mathcal{Z}$  there is a set $B\in\mathcal{Z}$ such that $C\subset^* B$ for all $C\in\mathcal C$.}
\medskip

Let $\fA=\big\{A\subset \mathbb N:d(A)\in\{0,1\}\big\}$ be the algebra of subsets of $\mathbb{N}$ generated by $\mathcal{Z}$.
We now let $K$ be the Stone space of the algebra $\fA$ so we treat elements of $K$ as ultrafilters on $\fA$. There are three types of such $x\in K$:
\begin{enumerate}
\item $\{n\} \in x$ for some $n\in\mathbb{N}$; then $x=\{A\in\fA: n\in A\}$ is identified with $n$;
\item $x$ contains no finite subsets of $\mathbb{N}$ but $Z\in x$ for some $Z\in\cZ$;
\item $Z\notin x$ for every $Z\in\cZ$; this defines the unique
\[p=\{A\in\fA:d(A)=1\}\in K.\]
\end{enumerate}
To see that $K$ is the required space it is enough to check the following  two facts.
\medskip

\textbf{Fact 2}. \label{l2}
\emph{The space $K$  contains no nontrivial converging sequence.}
\begin{proof}
 In fact we check that every infinite $X\subset K$ contains  an infinite set $Y$ such that $\overline{Y}$ is homeomorphic to $\beta\en$.
Note first that for every $Z\in\cZ$, the corresponding clopen set
\[\widehat{Z}=\{x\in K: Z\in x\},\]
is homeomorphic to $\beta\en$ because $\{A\in\fA: A\subset Z\}=P(Z)$.

For an infinite set $X\subset K$, we have two cases:

\medskip

\emph{Case 1},  $X\cap\en$ is infinite. There is an infinite $Z\subset X\cap\en $ having density zero. Then every subset of $Z$ is in $\fA$, which implies that
 $\overline{Z}\cong\beta\en$ .

\medskip

\emph{Case 2},  $X\cap ({K\sm \en})$ is infinite. Let us fix a sequence of different $x_n \in X\cap (K\sm\en)$ such that $x_n\neq p$ for every $n$.
 Then for every $n$ we have $Z_n\in x_n$ for some $Z_n\in\cZ$. Take $B\in\cZ$ as in Fact 1.
 Then $B\in x_n$  because $x_n$ is a nonprincipial ultrafilter on $\mathfrak A$ so $A_n\sm B\notin x_n$. Again, we conclude that $\overline{\{x_n: n\in\en\}}$ is $\beta\en$.
\end{proof}
\textbf{
Fact 3}: \label{l3}
\emph{If $\nu_n=\frac1n\sum_{k\le n} \delta_k$ and $\mu_n=\frac12(\nu_n - \delta_p)$ for $n \in \mathbb{N}$, then $\nu_n(f)\to_n \delta_p(f)$ and $\mu_n(f)\to_n 0$ for every $f\in C(K)$.}
\begin{proof}
Observe $\nu_n(A)\to_n d(A)$ for every $A\in\fA$ since elements of $\fA$ have asymptotic density either 0 or 1. This
means that, when we treat $\nu_n$ as measures on $K$ then $\nu_n(V)$ converges to $\delta_p(V)$ for every
clopen set $V\subset K$. This implies the assertion since every continuous function on $K$ can be uniformly approximated by
simple functions built from clopens.
\end{proof}

\section{Proof of Theorem~\ref{t:CP}}\label{s:CP}

Let us recall that a topological space $X$ is called
\begin{itemize}
\item {\em submetrizable} if $X$ admits a continuous metric;
\item {\em hemicompact} if $X$ has a countable family $\mathcal K$ of compact sets such that each compact subset of $X$ is contained in some compact set $K\in\K$;
%\item a {\em $k_\w$-space} if $X$ has a countable cover $\mathcal K$ by compact sets such that a set $F\subset X$ is closed if and only if for every compact subset $K\in\mathcal K$ the intersection $F\cap K$ is closed in $K$;
\item a {\em $k$-space} if a subset $F\subset X$ is closed if and only if for every compact subset $K\subset X$ the intersection $F\cap K$ is closed in $K$.
\end{itemize}

In order to prove Theorem~\ref{t:CP}, we should check the equivalence of the following conditions for every Tychonoff space $X$:
\begin{enumerate}
\item $X$ is a submetrizable hemicompact $k$-space;
\item $C_k(X)$ is Polish;
\item  $C_p(X)$ is Polishable.
\end{enumerate}

$(1)\Rightarrow(2)$: If $X$ is a submetrizable hemicompact $k$-space, then $X=\bigcup_{n\in\w}X_n$ for some increasing sequence $(X_n)_{n\in\w}$ of compact metrizable spaces such that each compact subset of $X$ is contained in some compact set $X_n$. Then the function space $C_k(X)$ is Polish, being topologically isomorphic to the closed subspace $$\{(f_n)_{n\in\w}\in \prod_{n\in\w}C_k(X_n):\forall n\in\w\;\;f_{n+1}{\restriction}X_n=f_n\}$$ of the countable product $\prod_{n=1}^\infty C_k(X_n)$ of separable Banach spaces.
\smallskip

$(2)\Ra(1)$ If the function space $C_k(X)$ is Polish, then by Theorem 4.2 in \cite{McCoy}, $X$ is a hemicompact $k$-space.   Taking into account that the space $C_p(X)$ is a continuous image of the  space $C_k(X)$, we conclude that  $C_p(X)$ has countable network and by \cite[I.1.3]{Arhangel}, the space $X$ has countable network. By  \cite[2.9]{Gru}, the space $X$ is submetrizable.
\smallskip

The implication $(2)\Ra(3)$ follows from the continuity of the identity map $C_k(X)\to C_p(X)$.
\vskip3pt

$(3)\Ra(2)$: Assume that the space $C_p(X)$ is Polishable and fix a stronger Polish  locally convex topology $\tau$ on $C_p(X)$. Let $C_\tau(X)$ denote  the separable Fr\'echet space  $(C_p(X),\tau)$. By $\tau_{k}$ denote the compact open topology of $C_{k}(X)$. Taking into account that the space $C_p(X)$ is a continuous image of the Polish space $C_\tau(X)$, we conclude that  $C_p(X)$ has countable network and by \cite[I.1.3]{Arhangel}, the space $X$ has countable network and hence is Lindel\"of. By the normality (and the Lindel\"of property) of $X$,  each closed bounded set in $X$ is countably compact (and hence compact). So $X$ is a $\mu$-space. By Theorem 10.1.20 in \cite[Theorem 10.1.20]{bonet} the function space $C_{k}(X)$ is barrelled. The continuity of the identity maps $C_k(X)\to C_p(X)$ and $C_\tau(X)\to C_p(X)$ implies that the identity map $C_k(X)\to C_\tau(X)$ has closed graph.
Since $C_k(X)$ is barelled and $C_\tau(X)$ is Fr\'echet, we can apply the Closed Graph Theorem 4.1.10 in \cite{bonet} and conclude that the identity map $C_k(X)\to C_\tau(X)$ is continuous.

Next, we show that the identity map $C_\tau(X)\to C_k(X)$ is continuous. Given any compact set $K\subset X$ and any $\e>0$ we have to find a neighborhood $U\subset C_\tau(X)$ of zero such that $$U\subset \{f\in C(X):f(K)\subset(-\e,\e)\}.$$  The continuity of the restriction operator $R:C_p(X)\to C_p(K)$, $R:f\mapsto f{\restriction}K$, and the continuity of the idenity map $C_\tau(X)\to C_p(X)$ imply that the restriction operator $R:C_\tau(X)\to C_p(K)$ is continuous and hence has closed graph. The continuity of the identity map $C_k(K)\to C_p(K)$ implies that $R$ seen as an operator $R:C_\tau(X)\to C_k(K)$ still has closed graph. Since the spaces $C_\tau(X)$ and $C_k(K)$ are Fr\'echet, the Closed Graph Theorem 1.2.19 in \cite{bonet} implies that the restriction operator $R:C_\tau(X)\to C_k(K)$ is continuous. So, there exists a neighborhood $U\subset C_\tau(X)$ of zero such that $$R(U)\subset \{f\in C_k(K):f(K)\subset(-\e,\e)\}.$$  Then $U\subset\{f\in C(X):f(K)\subset(-\e,\e)\}$ and we are done. Hence $\tau=\tau_{k}$ is a Polish locally convex topology as claimed.

\end{document}